\newcommand{\CC}{\mathbb C}
\newcommand{\AAA}{\mathbb A}
\newcommand{\PP}{\mathbb P}
\newcommand{\ZZ}{\mathbb Z}
\newcommand{\QQ}{\mathbb Q}
\newcommand{\lam}{\lambda}
\newcommand{\Lam}{\Lambda}
\newcommand{\al}{\alpha}
\newcommand{\be}{\beta}
\newcommand{\gog}{\mathfrak g}
\newcommand{\om}{\omega}
\newcommand{\ep}{\varepsilon}
\newcommand{\Si}{\Sigma}
\newcommand{\Oh}{\mathcal O}
\newcommand{\sL}{\mathcal L}
\newcommand{\into}{\hookrightarrow}
\newcommand{\onto}{\twoheadrightarrow}
\newcommand{\ra}{\rightarrow}
\newcommand{\color}[6]{}
\theoremstyle{change}   
 \newtheorem{theorem}[subsection]{Theorem}
 \newtheorem{prop}[subsection]{Proposition}
\newtheorem{example}[subsection]{Example}
\newtheorem{rmk}[subsection]{Remark}
{
\theorembodyfont{\rmfamily}

 \newtheorem{nothing}[subsection]{}
}
 \newenvironment{proof}{\paragraph{Proof}}{\par\medskip}
\DeclareMathOperator{\SL}{SL}
\DeclareMathOperator{\GL}{GL}
\DeclareMathOperator{\hcf}{hcf}
\DeclareMathOperator{\Gr}{Gr }
\DeclareMathOperator{\Proj}{Proj}
\DeclareMathOperator{\Spec}{Spec}
\numberwithin{equation}{section}
\begin{document}

\title{Constructing projective varieties \\ in weighted flag varieties}
\date{}
\author{Muhammad Imran Qureshi and Bal{\'a}zs Szendr{\H{o}}i}

\maketitle
\begin{abstract}
We compute the Hilbert series of general
weighted flag varieties and discuss 
a computer-aided method to determine their defining equations. 
We apply our results to weighted flag varieties coming from the Lie groups 
of type $G_2$ and $\GL(6)$, to construct some families of polarised  
projective varieties in codimensions~$8$ and~$6$, respectively.  
\end{abstract}

\section{Introduction} The aim of this article is twofold. Firstly, we prove 
a formula for the Hilbert series of general weighted flag varieties of
Grojnowski and Corti--Reid~\cite{wg} and discuss 
a computer-aided method to determine their defining equations. 
The Hilbert series is expressed in terms of Lie-theoretic data 
associated to the weighted flag variety. The equations are determined 
using an explicit construction of highest weight modules over Lie algebras. 
Secondly, we use these results to construct families of projective varieties
in higher codimension by taking quasi-linear sections of weighted flag 
varieties. As examples, we exhibit in this article some new families of 
Calabi--Yau threefolds in codimensions 6 and 8. A more complete list of 
families arising from this construction will be given elsewhere~\cite{Q}. 

We are interested in polarized varieties $(X,D)$, projective varieties $X$ 
polarised by a \(\QQ\)-ample Weil divisor \(D\) such that some integer 
multiple of \(D\) is Cartier. 
Such a polarized variety $(X,D)$ gives rise to a finitely generated graded ring
\begin{displaymath}
R(X,D)=\bigoplus_{n \geq 0}H^0(X,nD).
\end{displaymath} 
A surjection 
\begin{displaymath}
k[x_0, \cdots , x_n]\onto R(X,D)
\end{displaymath}
from a free graded ring \(k[x_0, . . . , x_n]\) generated by variables \(x_i\) of weights \(w_i\) 
gives an embedding of \((X, D)\) into weighted projective space
\[i : X = \Proj R(X,D) \into \PP[w_0, \cdots , w_n]\]
with the divisorial sheaf \(\Oh_X(D)\) being isomorphic to \(\Oh_X(1) = i^*\Oh_{\PP}(1)\). 

We are looking for examples where this embedding is of relatively low codimension.
We work in the framework reviewed in~\cite{ABR}, where the classical case of codimension at 
most 3 is discussed. See~\cite{altinok} for codimension 4 examples, and Corti and Reid \cite{wg} 
for examples in codimension 5. 

We construct examples in codimensions 6 and 8 arising as quasilinear sections of particular 
weighted flag varieties $(w\Sigma, \Oh_{w\Sigma}(1))$ embedded in weighted projective space 
by their natural Pl\"ucker-type embeddings. We look for candidate examples by 
computing the Hilbert series of a weighted flag variety of a given type. 
To understand the singularities of quasilinear sections, we need to know
the defining equations of the flag variety. Defining ideals of flag varieties are described 
in~\cite[Sec 1]{rudakov} in Lie-theoretic terms; 
we work out the equations by a GAP4 code, using an explicit construction of
representations which appears in~\cite{degraaf}.
In theory, this approach allows one to search for examples in arbitrarily high 
fixed codimension.

After fixing notation in Section~\ref{notationsec}, in Section \ref{hssec} we prove 
a formula for the Hilbert series $P_{w\Sigma}$ of a weighted flag variety \(w\Si \). 
We also describe a method of determining the defining equations, and explain how to 
construct families of polarized varieties such as Calabi--Yau threefolds as quasi-linear 
sections of \(w\Si\). In Section \ref{g2sec}, we study weighted flag varieties associated 
to the Lie group of type \(G_2\), leading to the codimension eight varieties. The case of
weighted homogeneous \(w\)Gr(2,6) in codimension six is discussed in Section \ref{g26sec}.
Complete lists of families that can be constructed in these ambient varieties, as well as
other examples of higher codimension, will appear elsewhere \cite{Q}. 

\subsection*{Acknowledgements} We wish to thank Willem De Graaf for 
providing us with the GAP4\ code to compute the defining equations of 
flag varieties and Richard Williamson for further help with GAP. 
We also thank Gavin Brown and Miles Reid for helpful discussions. 
The first author has been supported by a grant from the Higher Education 
Commission (HEC) of Pakistan.

\section{Definitions and conventions}\label{notationsec}

\subsection{Algebraic geometry} We work over a field \(\CC\) of complex numbers. A polarised 
variety is a pair \((X,D)\), where~\(X\) is a normal projective algebraic variety and~\(D\) is an 
ample $\QQ$-Cartier Weil divisor on~X. 

For positive weights $w_0,w_1,\cdots,w_n $, we use the standard notation
$\PP[w_0,w_1,\cdots,w_n]$ for weighted projective space; sometimes we will write $w\PP$
if the weights are clear. The weighted projective space \(\PP[ w_i]\) is called well-formed, 
if no \(n-1\) of \(w_0,\cdots,w_n\) have a common factor. 
The point $P_i\in w\PP$ with coordinates $[0,\dots,0,1,0,\dots, 0]$, 
where the 1 is in the $i$-th position, is a vertex, the 1-dimensional\index{toric!stratum} 
stratum $P_i P_j$ an edge, etc. Defining $h_{i,j,\dots}=\hcf(w_i,w_j,\dots)$, the vertex $P_i$ 
of a well-formed weighted projective space is 
a singularity of type $$\frac{1}{w_i}(w_0,\dots, \widehat {w_i},\dots,w_n).$$ 
Each generic point $P$ of the edge $P_i P_j$ has an analytic 
neighbourhood $P\in U$ which is analytically isomorphic to $(0,Q)\in \AAA^1 \times X$, 
where $Q\in X$ is a singularity of type $$\frac{1}{h_{i,j}}(w_0,\dots,\widehat{w_i},\dots,
\widehat{w_j},\dots,w_n).$$ Similar results hold for higher dimensional strata.

A projective subvariety \(X\subset \PP^n[w_i]\) of codimension \(c\)  is called well-formed, 
if \(\PP^n[w_i] \) is well-formed and \(X\) does not contain a codimension \(c+1\) singular 
stratum of \( \PP[w_i]\). 
The subvariety \(X\subset \PP[w_i]\) is quasi-smooth, if the affine cone 
\(\widetilde X \subset \AAA^{n+1}\) of \(X\) is smooth outside its vertex \(\uline{0}\). 
If \(X\) is quasi-smooth, then it will only have quotient singularities induced by the  
singularities of \(\PP[w_i]\). The subvariety \(X\subset \PP[w_i]\) will always be 
assumed to be polarized by the restriction of the tautological ample divisor 
$\Oh_{\PP}(1)$. 

The Hilbert series of a polarised projective variety \((X,D)\) is 
\[ P_{(X,D)}(t)=\sum_{n \geq 0}\dim H^0(X,nD) t^{n}.
\]
We will sometimes write $P_X(t)$ if no confusion can arise. Appropriate Riemann--Roch 
formulas, together with vanishing, can be used to compute $h^0(X,nD)=\dim H^0(X, nD)$ 
in favourable cases. 

A polarised Calabi--Yau threefold \((X,D)\) is a Gorenstein, normal, projective 
three dimensional algebraic variety with \(K_{X} \sim 0\)  and 
\(H^{1}(X,\Oh_X)=H^2(X,\Oh_X)=0\). We allow \((X,D)\) to have at worst canonical quotient 
singularities, consisting of points and curves on \(X\).
If \(C\) is a curve of singularities of such an \((X,D)\), take a generic surface 
\(S\subset X\) such that \(C\) and \(S\) intersect transversely in finite number of points. 
If every point in the intersection is a singular point of type \(\frac{1}{r}(1,-1)\) 
on polarised surface \((S,D\vert_S)\), then \(C \in X\) is called a curve of singularities 
of type \(\frac{1}{r}(1,-1)\), known  as an \(A_{r-1}\) curve. Each \(A_{r-1}\) curve contains 
a finite number of dissident points, where the singularity is worse. If an \(A_{r-1}\) curve 
\(C\) contains the dissident points \(\lbrace  P_{\eta}: \eta \in \Lam \rbrace \) of type 
\(\frac{1}{r \tau_{ \eta}}(w_{1\eta},w_{2 \eta},w_{3 \eta})\), then we define the index of 
\(C\) to be \begin{displaymath}
\tau_{C}=\text{lcm}_{\eta \in \lam}  \lbrace{ \tau_{\eta}\rbrace}.
\end{displaymath}
For an \(A_{r-1}\) curve \(C\) with no dissident points, \(\tau_{C}=1\). 
The contribution of $C$ to the Riemann--Roch formula \cite{anita} 
is determined by \(\tau_{C}\) as well as a further invariant \(N_C\) depending on 
the normal bundle of $C$ in $X$; see \cite[p.16]{anita} for a detailed description.

%Let \(P\) be the singular point of type \(\dfrac{1}{r}(w_1,w_2,w_3)\), then the  integers \(r,w_{1},w_2 \text{ and }w_3\) determine the type of \(P.\) If \(r\) is coprime to each of  the \(w_i\) then \(P\) is an isolated singular point. If \(\gcd (r,w_i\)) is non trivial for one of \(w_i\),s then \(P\) is called a  dissident singular point which lies on a dissident curve. If \(\gcd(r,w_i)>1  \), for two of the \(w_i\),s or for all  three of them, then \(P\) is the dissident singular point lying on the intersection of two or three dissident curves respectively. 

\subsection{Representation theory}
Let  \(G \) to be a reductive Lie group  with corresponding Lie algebra  $\mathfrak g$. 
We denote by \(B\) a Borel subgroup of \(G\), by \(P\)  a parabolic subgroup of \(G\), 
and by \(T\) a maximal torus, such that \(T\subset B \subset P \subset G\). 
Let $\Lam_W={\rm Hom}(T, \CC^*)$ be the weight lattice. 

We recall that any irreducible representation \(V\) of  \(G\) has a decomposition 
\[V=\bigoplus_{\al\in\Lambda_W} V_{\al}\]
into eigen\-spaces under the action of the maximal torus \(T\);
the \(\al\)'s with nonzero $V_\alpha$ are the weights of the representation \(V\). 
The nonzero weights of the adjoint representation are called the roots. 
The root lattice is the sublattice \(\Lam_R\) of  $\Lam_W$
generated by the roots. Their dual lattices of 
one-parameter subgroups are denoted by \(\Lam_R^* \) and \(\Lam^*_W\) respectively; 
we have a perfect pairing \(<,>:\Lam_W\times \Lam_W^*\ra\ZZ. \) 

We denote the set of roots by \(\nabla \) and fix a decomposition into positive 
and negative roots, \(\nabla_+\cup \nabla_-\). The set of 
positive roots that can not be written as a sum of two other positive 
roots form the set of simple roots  \(\nabla_0\) of $G$. Any root \(\al \in \nabla\) 
determines an involution of the maximal torus T, and hence a reflection \(s_{\al}\).  
These reflections generate the Weyl group \(W\) of \(G\). The reflections corresponding 
to the simple roots are called simple reflections.

There is a partial order defined on the weights of any representation by \(\al \leq \be\) if and only if \(\be - \al\) is non-negative linear combination of simple roots. A weight \(\lam \) of a representation is called the highest or dominant  weight of the representation  if no other weight is greater than \(\lam\) with respect to this partial order. For semisimple $G$, there is a set of weights \(\om_1,\cdots, \om_n \) in \(\Lam_W\), called the fundamental weights, defined by the property that any dominant weight can be written uniquely as a non-negative integral linear combination of the fundamental weights.

The quotient of the Lie group \(G\) by a Borel subgroup, \(\Si=G/B\) is 
the complete flag variety. Quotients of the form \(\Si=G/P,\) for $P\subset G$ 
a parabolic, are called (generalized) flag varieties or projective homogeneous spaces
in the literature; we will use the words flag variety and homogeneous variety 
interchangeably.

Let $p\in \PP V_{\lam}$ be the highest weight vector in the representation $V_\lam$ of $G$
with highest weight $\lam\in  \Lam_W$. Let $P_\lambda$ be the parabolic subgroup corresponding  
to those elements in \(\nabla_0\), which are orthogonal to the weight \(\lam\) 
in the weight lattice. Then the unique closed orbit $G.p$ of the action of $G$ on  
$\PP V_{\lam}$ realizes the flag variety $ \Sigma = G/P_{\lambda}$ as a projective
subvariety of $\PP V_{\lam}$. All flag varieties $G/P$ can be realised as projective 
varieties in this way. 

As an example, recall that if \(G=\SL(n,\CC)\), then \(B\) is a subgroup of upper 
triangular matrices, and $G/B$ is the variety of all flags \begin{displaymath}
\Si=\SL(n,\CC)/B=\lbrace{ 0 \subset V_1 \subset V_2 \subset \cdots \subset V_n=V \rbrace } 
\end{displaymath} of subspaces, with \(\dim V_k=k.\) The group \(G\) acts transitively on \(\Si\) and \(B\) is the stabilizer of a fixed flag. If \(P\) is a  parabolic subgroup of \(SL(n,\CC)\) containing \(B\) then  the quotient \[\Si= \SL(n,\CC)/P=\lbrace 0 \subset V_i \subset V_m \subset \cdots \subset V_n \rbrace\] is a (generalised) flag variety, with \(P\) being the stabilizer of a fixed partial flag.

\section{Weighted flag varieties}\label{hssec}
\subsection{Hilbert series formula}
We start by  recalling the notion of weighted flag variety due to Grojnowski and Corti--Reid \cite{wg}. 
Fix a reductive Lie group $G$ as above, and a highest weight \(\lam \in \Lam_W \).  
Let $\Sigma=G/P_\lam$ be the corresponding flag variety as above.
Choose \(\mu \in \Lam_W^* \) and an integer \(u \in\ZZ\) such that \begin{equation}
<w\lam,\mu>+u >0
\label {weights} 
\end{equation}
for all  elements \(w\) of the Weyl group. 
Consider the affine cone $ \widetilde{\Sigma} \subset  \widetilde{V_{\lambda}}$ of the 
embedding $\Sigma \hookrightarrow \PP V_{\lambda}$,
and divide it by the $ \CC^*$-action on $V_{\lambda}-\{0\}$ given by
\[ 
(\varepsilon \in \CC^*) \mapsto ( v \mapsto \varepsilon^u(\mu(\varepsilon)\circ v)).
\]
Denote this variety by \(w\Si(\mu,u)\) or $w\Sigma$ if no confusion can arise. 
The inequality (\(\ref{weights}\)) ensures that all the weights 
on \(w\Si(\mu,u)\subset w\PP V_{\lam} \) are positive, so the quotient indeed exists.

\begin{theorem} The Hilbert series of the weighted flag variety 
$\left(w\Sigma(\mu,u),D\right)$ has the following closed form.
\begin{equation}
P_{w\Si}(t)=\dfrac{\sum_{w\in W}(-1)^w \dfrac{t^{<w\rho, \mu>}}{(1- t^{<w\lambda,\mu>+u})}}                                                                     {\sum_{w\in W}(-1)^w t^{<w\rho, \mu>} }.
\label{whhs}
\end{equation}
Here \(\rho\) is the Weyl vector, half the sum of the positive roots of \(G\), and  $(-1)^w=1  \; \mbox{or} -1$ depending on whether $w$ consists of an even or odd number of simple reflections in the Weyl group $W$.
\end{theorem}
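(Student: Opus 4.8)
The plan is to identify the section ring $R(w\Si,D)=\bigoplus_{n\geq0}H^0(w\Si,nD)$ with the homogeneous coordinate ring of the cone $\widetilde{\Si}$, \emph{regraded} by the weighted $\CC^*$-action, and then to evaluate its Poincar\'e series by feeding the Weyl character formula through a one-variable specialization. First I would recall that the affine cone over $\Si\into\PP V_\lam$ in its minimal Pl\"ucker-type embedding has coordinate ring
\[
\bigoplus_{m\geq 0} V_{m\lam},
\]
the degree-$m$ piece being the irreducible $G$-module of highest weight $m\lam$. This is the Borel--Weil theorem together with Kempf (Borel--Weil--Bott) vanishing, which rules out higher-cohomology corrections to $H^0$ and pins down the ring in the expected degrees. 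As a vector space this is also $R(w\Si,D)$, but the weighting data $(\mu,u)$ changes the grading: the action $\varepsilon\mapsto(v\mapsto\varepsilon^u(\mu(\varepsilon)\circ v))$ scales a weight vector of weight $\beta$ lying in Pl\"ucker degree $m$ by $\varepsilon^{mu+\langle\beta,\mu\rangle}$ (the one-parameter subgroup $\mu$ contributes $\langle\beta,\mu\rangle$, while the scalar factor $\varepsilon^u$ contributes $mu$ in degree $m$).

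Consequently the degree-$n$ part of $R(w\Si,D)$ is spanned by those weight vectors $\beta$ in $V_{m\lam}$, over all $m$, with $mu+\langle\beta,\mu\rangle=n$, so that
\[
P_{w\Si}(t)=\sum_{m\geq 0} t^{mu}\,\mathrm{ch}(V_{m\lam})\Big|_{e^\beta\mapsto t^{\langle\beta,\mu\rangle}},
\]
where $\mathrm{ch}(V_{m\lam})=\sum_\beta\dim(V_{m\lam})_\beta\,e^\beta$ is the formal character. Here the sign and duality conventions are fixed so that the weighted degrees are the positive quantities appearing in (\ref{weights}). I would then substitute the Weyl character formula
\[
\mathrm{ch}(V_{m\lam})=\frac{\sum_{w\in W}(-1)^w e^{w(m\lam+\rho)}}{\sum_{w\in W}(-1)^w e^{w\rho}}
\]
and apply the specialization $e^\beta\mapsto t^{\langle\beta,\mu\rangle}$. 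Using $\langle w(m\lam+\rho),\mu\rangle=m\langle w\lam,\mu\rangle+\langle w\rho,\mu\rangle$, the numerator becomes $\sum_w(-1)^w t^{\langle w\rho,\mu\rangle}\,t^{m\langle w\lam,\mu\rangle}$, while the denominator $\sum_w(-1)^w t^{\langle w\rho,\mu\rangle}$ is independent of $m$.

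Pulling this $m$-independent denominator out and interchanging the \emph{finite} sum over $W$ with the sum over $m$ (no convergence issue, since $W$ is finite) gives
\[
P_{w\Si}(t)=\frac{1}{\sum_{w\in W}(-1)^w t^{\langle w\rho,\mu\rangle}}\sum_{w\in W}(-1)^w t^{\langle w\rho,\mu\rangle}\sum_{m\geq 0} t^{m(\langle w\lam,\mu\rangle+u)}.
\]
The inner geometric series sums to $(1-t^{\langle w\lam,\mu\rangle+u})^{-1}$, and this is exactly where the positivity hypothesis (\ref{weights}) enters: $\langle w\lam,\mu\rangle+u>0$ for every $w$ guarantees each exponent is a genuine positive power of $t$, so the series makes sense as a power series (equivalently, each graded piece of $R(w\Si,D)$ is finite-dimensional) and the resulting denominators $1-t^{\langle w\lam,\mu\rangle+u}$ do not degenerate. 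Substituting back yields precisely (\ref{whhs}).

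I expect the genuine obstacle to be the very first step, not the final manipulation, which is purely formal. One must justify carefully that $R(w\Si,D)$ is the regraded ring $\bigoplus_m V_{m\lam}$ with a weight-$\beta$ vector in Pl\"ucker degree $m$ placed in degree $mu+\langle\beta,\mu\rangle$: this rests on Borel--Weil--Bott vanishing to identify the graded pieces with no spurious cohomology, and on a careful bookkeeping of how the extra scalar $\varepsilon^u$ interacts with the subgroup $\mu$ across the grading of the cone and its $\CC^*$-quotient. Once the graded character is correctly identified, the passage to the closed form is an immediate consequence of the Weyl character formula together with the geometric-series summation secured by (\ref{weights}).
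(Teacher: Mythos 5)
Your proposal is correct and follows essentially the same route as the paper's own proof: identify the section ring with $\bigoplus_m V_{m\lambda}$ regraded by $(\mu,u)$ via Borel--Weil--Bott, specialize the Weyl character formula, and sum the geometric series (the paper's ``rearrangement of terms''), with the positivity condition (\ref{weights}) guaranteeing convergence. Your write-up is in fact somewhat more explicit than the paper's about the regrading bookkeeping and about where (\ref{weights}) is used.
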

\begin{proof} Any weight \(\lam\) of \(G\)-representation \(V_{\lam}\) gives rise to a line bundle \(\sL_{\lam}\) on the straight flag variety \(\Si\). If \(\lam\) is dominant, then by the Borel--Bott--Weil theorem, the space of holomorphic sections of the line bundle \(\sL_{\lam}\) is the irreducible representation with highest weight \(\lam\).

Let \(D=\Oh_{w\Si}(1)\) under the embedding \(w\Si \subset w\PP V_{\lam}\), then we have a graded ring
\begin{equation}
R= \bigoplus_{n \geq 0} H^0(w \Sigma, \mathcal{O}_{w\Sigma}(nD)).
\label{wgr}
\end{equation} 
By construction, \( \Spec(R)=\widetilde {w\Si}\), the affine cone over the weighted flag variety \(w\Si\). However, this affine cone is the same as the  straight affine cone \(\widetilde{\Si}\), so by the Borel--Bott--Weil theorem, we have
\begin{equation}
R= \bigoplus_{m \geq 0} V_{m\lambda}. 
\label{sgr}
\end{equation} 
The grading on $ R  $ given by (\ref{wgr}) is different from the grading given by (\ref{sgr}) because of the weights. The Hilbert series of the graded ring  (\ref{wgr}) is given by 
\begin{equation}
P_{w\Si}(t)= \sum_{n \geq 0} h^0(w \Sigma, \mathcal{O}_{w\Sigma}(nD)) t^n.
\label{whs}
\end{equation}
By the Weyl character formula, 
\begin{equation}
{\rm Char}( V_{\lambda}) = \dfrac{ \sum_{w\in W}(-1)^w t^{w(\lambda +\rho)}}{\sum_{w\in W}(-1)^w t^{w(\rho)}} .
\label{wcf}
\end{equation}
Therefore by (\ref{whs}) and (\ref{wcf}), the Hilbert series of the weighted flag variety with weights  $<w\lambda, \mu > + u  $, which are positive by (\ref{weights}), is given by the infinite sum
$$ P_{w\Si}(t)= \sum_{n \geq 0}\dfrac{\sum_{w\in W}(-1)^w t^{<w(n\lam+\rho),\mu>+nu}}{\sum_{w \in W}(-1)^w t^{<w\rho,\mu>}} .$$
After rearrangement of terms we get the claimed formula~\eqref{whhs}.
\end{proof}

\begin{rmk}
Since~\eqref{whhs} involves summing over the Weyl group, it is best to use a computer 
algebra system for explicit computations. We used GAP, SAGE and Mathematica in combination 
to work out the Hilbert series formulas for the flag varieties appearing below.
As immediate examples, note that the formulas appearing in \cite[p.8 and p.23]{wg} 
can be derived from~\eqref{whhs} with little effort.
\end{rmk}

\begin{rmk} Note that by the Weyl denominator identity, the denominator of our expression for 
the Hilbert series has two equivalent forms 
\begin{equation}
\sum_{w\in W}(-1)^w t^{<w\rho,\mu>}=t^{<\rho,\mu>} \prod_{\al \in \nabla_+}(1-t^{<-\al,\mu>}).
\label{hdidentity}
\end{equation}
Note also that by standard 
Hilbert--Serre theorem \cite[Theorem 11.1]{atiyah}, 
the Hilbert series of the weighted flag variety has a reduced expression
\begin{equation}
P_{w\Si}(t)=\dfrac{N(t)}{ \prod_{w_i \in \Lam_{W}}(1-t^{<w_i,\mu>+u})}.
\label{reducedhs}
\end{equation}
Here the product is over all the weights of the representation \(V_{\lam}\);  
the polynomial \(N(t)\) is called the Hilbert numerator of the Hilbert series.
There is a lot of cancellation happening when going from~\eqref{whhs} to~\eqref{reducedhs}.
\end{rmk}

\subsection{Equations of flag varieties} \label{sec:eq}
The flag variety \(\Si=G/P \into \PP V_{\lam}\) is defined by an ideal  \(I =<Q>\)  
of quadratic equations generating a linear subspace \(Q \subset Z=S^2 V^*_{\lam}  \)
of the second symmetric power of the contragradient representation \(V^*_{\lam}\). 
The $G$-representation $Z$ has a decomposition 
\begin{displaymath}
Z=V_{2\nu}\oplus V_1 \oplus\cdots \oplus V_n
\end{displaymath}  
into irreducible direct summands, with \(\nu\) being the highest weight of the 
representation \(V^*_{\lam}\). As discussed in \cite[2.1]{rudakov}, 
the subspace \(Q\) in fact consists of all the summands except~\(V_{2\nu}\). 

We compute these equations by using GAP4 code which follows an algorithm given in~\cite{degraaf}. 
The ideal \(I\) is constructed as a certain left ideal of the Verma module corresponding to 
the highest weight \(\lam \) of the Lie algebra \(\gog\). In practice,  
fix a Lie algebra \(\gog\) and dominant weight \(\lam\). Take the left action of 
\(\gog\) on its  universal enveloping algebra \(U(\gog)\) and construct the Verma module 
\(M(\lam)=U(\gog)/N(\lam)\), where \(N(\lam)\) is a certain \(\gog\)-submodule of \(U(\gog)\). 
The module \(M(\lam)\) is also an algebra, and there exist a certain left ideal 
\(I(\lam)\) of it such that \(M(\lam)/I(\lam)\) is an irreducible module of highest 
weight \(\lam\). One can construct the last quotient by calculating 
a Gr{\"o}bner basis of \(I(\lam)\).

The defining equations of the flag varieties appearing in \cite[p.4 and p.20]{wg} can easily 
be recovered using this algorithm, implemented in the GAP4~\cite{GAP4} code given in the Appendix. 

\subsection{Constructing polarized varieties}
We recall the different steps in the construction of polarized varieties as
quasi-linear sections of weighted flag varieties. We will concentrate on the case
of Calabi--Yau threefolds; obvious modifications apply for different classes of 
varieties (Fano, general type, etc). 

\begin{enumerate}\item \textbf{Choose embedding.} We choose a reductive Lie group 
\(G\) and a \(G\)-representation \(V_{\lam}\) of dimension $n$ 
with highest weight \(\lam\). 
We get a straight flag variety $\Si = G/P_\lam \into \PP V_{\lam}$ of 
computable dimension $d$ and codimension~$c=n-1-d$. We choose \(\mu \in \Lam_W^*\) 
and \(u \in \ZZ\) to get an embedding \(w\Si \into w\PP V_{\lam}[w_i]\).
The equations of this embedding can be found as described above.

\item \textbf{Compute Hilbert series and \(K_{w\Si}\).}  We compute the Hilbert series 
of \(w\Si\) by expanding and simplifying (\ref{whhs}) for the given values of 
\(\lam,\mu,u\). If \(w\Si\) is well-formed, we can derive the canonical 
divisor class \(K_{w\Si}\) of \(w\Si.\)

\item \textbf{Take threefold Calabi--Yau section of \(w\Si\).} 
We take a quasi-linear complete intersection
\begin{displaymath}
X= w\Si \cap (w_1)\cap \cdots \cap (w_k)
\end{displaymath} 
inside \(w\Si \) of generic hypersurfaces of degrees equal to some of the weights $w_i$. 
We choose values so that \(K_X=0\) and \(\dim(X)=d-k=3.\) 
This gives the embedding \(X \into \PP[w_0,\cdots,w_s]\), with \(s=n-k.\)
More generally, as in~\cite{wg}, we can take complete intersections inside projective cones
over $w\Si$, adding weight one variables to the coordinate ring which are not involved in any 
relation.

\item \textbf{Check singularities.} As we are interested in quasi-smooth 
Calabi--Yau threefolds, all the singularities of \(X\) must be induced by the weights 
of \(\PP^s[w_i]\). Singular strata of \(\PP^s[w_i]\) correspond to sets of weights 
\(w_{i_0},\cdots,w_{i_p}\) with \(\gcd(w_{i_0},\cdots,w_{i_p})=r\)  
non-trivial, defining a singular stratum $S\subset w\PP$. If the intersection
\(Y=X\cap S\) is non-empty, it has to be a point or a curve, and we need to
find local coordinates in neighbourhoods of points  \(P\in Y\) to check the local
transversal structure. 

\item \textbf{Find invariants and check consistency.} Using the orbifold 
Riemann--Roch formula of~\cite[Section 3]{anita}, we can compute the invariants
of the singular points in our family from the first few values of $h^0(nD)$, and verify 
that the same Hilbert series can be recovered.
\end{enumerate}

\section{Weighted \(G_2\) flag varieties}\label{g2sec}

\subsection{Generalities} Consider the simple Lie group of type $G_2$. Denote by $\al_1, \al_2 \in \Lam_W$ 
a pair of simple roots of the root system $\nabla$ of $G_2$, 
taking $\al_1$ to be the short simple root and $\al_2$ the long one; see Figure \ref{rootsg2}.
The fundamental weights are $ \omega_1=2\al_1+\al_2 $ and $ \omega_2=3\al_1+2\al_2$. 
The sum of the fundamental weights, which is equal to half the sum of the positive roots, 
is $\rho=5\al_1+3\al_2 $. We partition the set of roots into long and short roots as 
$\nabla=\nabla_l \cup \nabla_s \subset \Lam_W $. 
Let ${\{\be_1,\be_2\}}$ be the basis of  the lattice $ \Lam_W^*$ 
dual to \(\{\al_1 , \al_2\}\).

\subsection{The \(G_2\) hypersurface} As a warmup exercise, 
consider the highest weight  $\lam_1=\omega_1=2\al_1+\al_2 $.
\begin{figure}
\centering
\scalebox {1}{ \input{rootsg2.pstex_t}}  
\caption{Root System of \(G_{2}\)}
\label{rootsg2} 
\end{figure}
The irreducible $G_{2}$-representation $V_{\lam_1} $ is seven-dimensional 
\cite[Chapter 22]{harris}, and its projectivisation contains the five-dimensional 
flag variety \(\Si \into \PP^6\) as a hypersurface. The weighted version is 
obtained by letting $ \mu=a\be_1+b\be_2 \in \Lam_W^*$ and $ u \in \ZZ $, leading to the weighted homogeneous variety $$ w\Sigma= w\Sigma(\mu,u) \subset w\PP V_{\lambda_1}.$$

\begin{prop} The Hilbert series of weighted \(G_2\) flag variety embedded in the 
\(G_{2}\)-representation with highest weight \(\om_1=\lam_{1}=2\al_1+\al_2\) is 
\begin{equation} P_{w\Sigma}(t)= \dfrac{(1-t^{2u})} {(1-t^u)\prod_{\alpha \in \nabla_s}\left( 1-t^{<\alpha,\mu>+u}\right) } .
\label{g2hs1}
\end{equation}
Moreover, if \(w\Si\) is well-formed, then \(K_{w\Si}=\Oh(-5u).\)
\end{prop}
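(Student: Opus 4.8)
The plan is to specialize the general Hilbert series formula~\eqref{whhs} to the data $G=G_2$, $\lam=\om_1$, and then to read off the canonical class by adjunction once the numerator and the coordinate weights are known. Rather than expand the twelve--term Weyl sum by brute force, I would first pin down the combinatorial data, identify numerator and denominator, and check the result against~\eqref{whhs} at the end.

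First I would record the weights of $V_{\om_1}$. Since $\om_1=2\al_1+\al_2$ is the highest short root, its Weyl orbit is the full set $\nabla_s$ of six short roots, and the seventh weight of the seven--dimensional representation is the zero weight. Feeding these seven weights into the reduced Hilbert--Serre form~\eqref{reducedhs}, the denominator becomes
\[
\prod_{\al\in\nabla_s}\bigl(1-t^{<\al,\mu>+u}\bigr)\cdot\bigl(1-t^{<0,\mu>+u}\bigr)
=(1-t^u)\prod_{\al\in\nabla_s}\bigl(1-t^{<\al,\mu>+u}\bigr),
\]
which is exactly the denominator claimed in~\eqref{g2hs1}.

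Next I would determine the Hilbert numerator $N(t)$. Because $\Si\into\PP^6$ is a single quadric hypersurface, its affine cone is cut out by one relation, so $N(t)=1-t^{d}$ for the weighted degree $d$ of that relation. By the discussion in Section~\ref{sec:eq}, this relation is the $G_2$--invariant quadratic form on $V_{\om_1}$, which pairs each weight with its negative and squares the zero weight; every monomial therefore has total $T$--weight zero, so under the $\CC^*$--action of~\eqref{weights} it acquires weighted degree exactly $2u$. Hence $N(t)=1-t^{2u}$, completing~\eqref{g2hs1}. As a consistency check one can verify that clearing denominators in~\eqref{whhs} over the common factor above collapses the Weyl sum to $1-t^{2u}$.

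Finally, for the canonical class I would invoke adjunction for a well--formed quasi--smooth hypersurface $X_d\subset\PP[w_i]$, namely $K_X=\Oh_X\bigl(d-\sum_i w_i\bigr)$; here quasi--smoothness is automatic, since $\widetilde{\Si}$ is the affine cone over the smooth projective variety $\Si$ and so is smooth away from its vertex. Here $d=2u$, while the coordinate weights sum to $\sum_i w_i = u+\sum_{\al\in\nabla_s}(<\al,\mu>+u)$; since the short roots occur in opposite pairs, their pairings with $\mu$ cancel, leaving $\sum_i w_i = u+6u = 7u$. Thus $K_{w\Si}=\Oh(2u-7u)=\Oh(-5u)$. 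The one genuinely representation--theoretic point, and the step I expect to need the most care, is the claim that the unique relation has weighted degree exactly $2u$; this rests on the invariant quadratic form pairing opposite short roots, equivalently on the vanishing of $\sum_{\al\in\nabla_s}<\al,\mu>$, which is also precisely what makes the adjunction computation come out cleanly.
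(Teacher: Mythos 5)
Your proof is correct, but it takes a genuinely different route from the paper's. The paper proves the proposition by brute force from Theorem 3.1: it expands the twelve-term Weyl sum in~\eqref{whhs} for $W=D_{12}$, obtains the intermediate form $P_{w\Si}(t)=(1+t^u)\big/\prod_{\al\in\nabla_s}\bigl(1-t^{<\al,\mu>+u}\bigr)$, and then multiplies numerator and denominator by $(1-t^u)$ so that the denominator displays all seven coordinate weights; the canonical class is read off from the degree ($2u$) and the weight sum ($7u$) exactly as you do. You instead bypass the Weyl sum entirely: you identify the seven weights of $V_{\om_1}$ (the short-root orbit plus the zero weight), observe that the ideal of $\Si\subset\PP^6$ is principal and generated by the $G_2$-invariant quadratic form, note that this form pairs opposite weights and hence is homogeneous of weighted degree exactly $2u$, and conclude $N(t)=1-t^{2u}$ from the Hilbert--Serre form of a hypersurface. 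This is more conceptual and explains \emph{why} the numerator is $1-t^{2u}$, whereas the paper's computation treats the statement ``$\Si$ is a quadric'' as a corollary of the Hilbert series rather than an input; your argument quietly relies on the standard facts that $\widetilde\Si$ is reduced, cut out by that single irreducible quadric, and projectively normal (so that $R(w\Si,D)=\CC[x_0,\dots,x_6]/(q)$), all of which hold here but deserve at least a sentence. Your adjunction step and the observation that $\sum_{\al\in\nabla_s}<\al,\mu>=0$ match the paper's computation of $K_{w\Si}=\Oh(-5u)$. In short: same result, with the paper buying uniformity with its general formula and you buying transparency from the geometry of the quadric.
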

\begin{proof}
The set of weights on \(w\Si \subset w\PP V_{\lam_1 }\) is given by \(<w\lam_1,\mu>+u  \) for elements \(w\) of the Weyl group. Since \(\lam_1\) is a  short root, the images of \(\lam_1\) under the action of the Weyl group \(W\) are exactly all the short roots appearing in Figure \ref{rootsg2}. We expand the formula  (\ref{whhs}) for \(\lam_1=\om_1\) and \(W=D_{12}\), 
the dihedral group on 6 letters, and we get 
$$ P_{w\Sigma}(t)= \dfrac{(1+t^u)} {\prod_{\alpha \in \nabla_s}\left( 1-t^{<\alpha,\mu>+u}\right) }.$$
The set of weights on \(w\PP V_{\lam_1}\) is 
\begin{center}
$\lbrace  u, <\alpha, \mu>+u $ for all roots $ \alpha \in \nabla_s\rbrace$. 
\end{center}
We multiply and divide by \((1-t^u)\) to get the full expression for  the Hilbert series, so our denominator contains all weights on \(w\PP V_{\lam_1}\). This gives the Hilbert series in the form of (\ref{g2hs1}) which is in the general form~\eqref{reducedhs}. The sum of the weights on \(w\PP V_{\lam_1}\) is \(7u\), therefore \(K_{w\Si}=\Oh(-5u).\)
\end{proof}

Thus, as is well known, the straight homogeneous variety with $\mu=(a,b)=(0,0)$ and \(u=1\)
is a quadric hypersurface in $\PP^6$:
$$ P_{\Sigma}(t)=\dfrac{1-t^{2}}{(1-t)^7}.
$$

\subsection{The codimension eight weighted \(G_2\)-variety} 
We consider the  $G_2$-representation with highest weight  $\lambda_2=\om_2=3\al_1+2\al_2$. 
The dimension of $ V_{\lam_2} $ is 14 \cite[Chapter 22]{harris}. The homogeneous variety 
$\Sigma \subset \PP V_{\lam_2} $ is five dimensional, so we have an embedding 
\(\Si^5 \into \PP^{13}\) of codimension 8. To work out the weighted version in this case, 
take $\mu=a\be_1+b\be_2 \in \Lam_W^*$ and \(u\in \ZZ\).

\begin{prop} The Hilbert series of the codimension eight weighted \(G_2\) flag variety is given by \begin{equation}  P_{w\Sigma}(t)= \dfrac{ 1- \left( 4+ 2 \sum_{\alpha \in \nabla_s}t^{<\alpha,\mu>} + \sum_{\alpha \in \nabla_s}t^{2< \alpha,\mu >}+\sum_{\alpha \in \nabla_l}t^{< \alpha,\mu >}\right) t^{2u}+\cdots+t^{11u} } {(1-t^u)^2 \prod_{\alpha \in \nabla}\left( 1-t^{<\alpha,\mu>+u}\right) }.\end{equation}  Moreover, if \(w\Si\) is well-formed, then the canonical bundle is $K_{w\Si}=\Oh(-3u)$. \label{hsg2long} \end{prop}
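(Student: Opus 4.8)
The plan is to follow exactly the template established in the proof of the previous proposition for the seven-dimensional representation, adapting each step to the fourteen-dimensional adjoint representation $V_{\lam_2}$. First I would identify the $14$ weights of $V_{\lam_2}$. Since $\lam_2 = \om_2$ is the highest long root, the Weyl orbit of $\lam_2$ consists of all six long roots $\nabla_l$, and the remaining weights of the adjoint representation are the six short roots $\nabla_s$ together with the two-dimensional zero weight space (the Cartan). This gives the set of weights on $w\PP V_{\lam_2}$ as
\begin{displaymath}
\{\,u\ (\text{with multiplicity }2),\ <\al,\mu>+u \text{ for all }\al\in\nabla=\nabla_l\cup\nabla_s\,\}.
\end{displaymath}
This immediately explains the denominator $(1-t^u)^2\prod_{\al\in\nabla}(1-t^{<\al,\mu>+u})$ in the claimed formula: it is precisely $\prod_{w_i\in\Lam_W}(1-t^{<w_i,\mu>+u})$ over all weights $w_i$ of $V_{\lam_2}$, as in the reduced form~\eqref{reducedhs}.

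Next I would expand the general Hilbert series formula~\eqref{whhs} for $\lam=\lam_2$ and $W=D_{12}$, the dihedral group of order $12$. Using the Weyl denominator identity~\eqref{hdidentity}, the denominator of~\eqref{whhs} factors as $t^{<\rho,\mu>}\prod_{\al\in\nabla_+}(1-t^{<-\al,\mu>})$, and the task reduces to computing the numerator $\sum_{w\in W}(-1)^w t^{<w\rho,\mu>}/(1-t^{<w\lam_2,\mu>+u})$ and carrying out the cancellation against this product. Concretely I would clear denominators by multiplying through by $\prod_{w\in W}(1-t^{<w\lam_2,\mu>+u})$, collect terms, and then match the result against the target reduced form~\eqref{reducedhs}, reading off the Hilbert numerator $N(t)$. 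The leading behaviour is forced: $N(0)=1$, and by the Gorenstein symmetry of $w\Si$ the numerator is palindromic, which is why the displayed formula records only the lowest nontrivial coefficients (the $t^{2u}$ term) and the top term $t^{11u}$, with the middle abbreviated as $\cdots$. I would verify that the coefficient of $t^{2u}$ is indeed $-\bigl(4+2\sum_{\al\in\nabla_s}t^{<\al,\mu>}+\sum_{\al\in\nabla_s}t^{2<\al,\mu>}+\sum_{\al\in\nabla_l}t^{<\al,\mu>}\bigr)$ by tracking the second symmetric power $S^2 V_{\lam_2}^*$ and subtracting the ideal of quadratic relations, as described in Section~\ref{sec:eq}.

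Finally, for the canonical bundle I would compute the sum of all the weights on $w\PP V_{\lam_2}$. Since the roots come in pairs $\pm\al$, the contributions $<\al,\mu>$ cancel in pairs, so the total weight is simply $14u$ minus nothing from the root part, giving $\sum w_i = 14u$. The adjunction formula for a codimension-eight subvariety of $w\PP V_{\lam_2}$ then yields $K_{w\Si}=\Oh\bigl(\deg N(t)\cdot u - \sum w_i\,\bigr)$; matching against the degree $11u$ of the numerator gives $K_{w\Si}=\Oh((11-14)u)=\Oh(-3u)$, as claimed, valid precisely when $w\Si$ is well-formed so that adjunction applies without correction terms.

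I expect the main obstacle to be the numerator computation: the full expansion of~\eqref{whhs} over the twelve Weyl group elements produces a large rational expression, and demonstrating that it collapses to the stated reduced form with the correct $t^{2u}$ coefficient requires substantial cancellation that is cleanest to carry out on a computer. As noted in the remark following the Hilbert series theorem, this is exactly the kind of calculation best delegated to GAP, SAGE or Mathematica, and I would present the final simplified numerator rather than the intermediate algebra. The representation-theoretic identification of the $t^{2u}$ coefficient with the decomposition of $S^2V_{\lam_2}^*$ is the one step requiring genuine care rather than routine bookkeeping.
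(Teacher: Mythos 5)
Your proposal is correct and follows essentially the same route as the paper: identify the fourteen weights of $V_{\lam_2}$ (the twelve roots plus the zero weight with multiplicity two), expand~\eqref{whhs} over $W=D_{12}$ by computer, rewrite in the reduced form~\eqref{reducedhs} with denominator $(1-t^u)^2\prod_{\al\in\nabla}(1-t^{<\al,\mu>+u})$, and obtain $K_{w\Si}=\Oh(11u-14u)=\Oh(-3u)$ from the degree of the numerator together with the sum $14u$ of the weights. The only real difference is cosmetic: the paper first records a compact intermediate form whose denominator runs only over $\nabla_l$ (the Weyl orbit of $\lam_2$) and then multiplies and divides by $(1-t^u)^2\prod_{\al\in\nabla_s}(1-t^{<\al,\mu>+u})$, whereas you propose to verify the $t^{2u}$ coefficient against the $28$ weights of the quadric space $Q\subset S^2V_{\lam_2}^*$ --- a sound cross-check consistent with Section~\ref{sec:eq}, though not one the paper carries out explicitly.
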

\begin{proof}The set of weights on \(G_2\)-representation with highest weight \(\lam_2=\om_2\) consists of all the roots given in Figure \ref{rootsg2}. The zero weight space appears with multiplicity two. Therefore the set of weights on \(w\PP V_{\lam_2}\) is given by \begin{center}\(\{u,u,<\al,\mu>+u\text{ for all }  \al \in \nabla \text{ of } G_2 \}\).\end{center}
We compute the Hilbert series expression (\ref{whhs}) for \(\lam_2=\om_2\) and Weyl group \(W=D_{12}\).  We get the following form of the Hilbert series.
$$ P_{w\Sigma}(t)= \dfrac{(1+t^u)\left( 1+t^u\left(  1+\sum_{\alpha \in \nabla_s}  t^{< \alpha,\mu >} \right)   +t^{2u}\right) } {\prod_{\alpha \in \nabla_l }\left( 1-t^{<\alpha,\mu>+u}\right) }.$$
Since \(\lam_2 \) is a long root, the images of \(\lam_2\) under the action of the  Weyl group are exactly the long roots. In this form we just have positive coefficients in our numerator. The fully multiplied Hilbert series can be obtained by multiplying and diving the last expression  with 
$$ (1-t^u)^2 \prod_{\alpha \in \nabla_s}\left( 1-t^{<\alpha,\mu>+u}\right).$$
So we get the expression (\ref{hsg2long}). The sum of the weights on \(w\PP V_{\lam_2}\) is \(14u\), therefore the canonical bundle is \(K_{w\Si}=\Oh(-3u).\)
\end{proof}

The Hilbert series of the straight flag variety \(\Si\into \PP^{13}\) can be computed to be 
$$P_{\Sigma}(t)= \dfrac{1-28t^2+105t^3-\cdots+105t^8-28t^9-t^{11}}{(1-t)^{14}}.$$
The image is defined by 28 quadratic equations, listed in Appendix A.

\subsection{Examples}

\begin{example} 
The threefold linear section
\begin{displaymath}
V=\Si  \cap H_1 \cap H_2 \subset \PP^{11},
\end{displaymath}
is a Fano threefold of genus 10, anti-canonically polarised by \(-K_V\) with \((-K_V)^3=18.\) This variety was constructed 
by Mukai using the vector bundle method in \cite{mukai}.

In fact, we failed to find any other $\QQ$-Fano which is a quasilinear section of a codimension 8 
weighted \(G_2\) flag variety or a cone over such. 
A list of 487 possible codimension 8 $\QQ$-Fano 3-folds 
with terminal singularities is available on Gavin Brown's graded ring data base page \cite{brown}. 
We checked that none of them matches the format above.
\end{example}

\begin{example}
\label{g2example}
We construct a family of Calabi--Yau threefolds as quasi-linear sections of the
weighted \(G_{2}\) flag variety \(w\Si \into \PP V_{\lam_2}\). 
 \begin{itemize}
\item Input data: $\mu=2\be_1-3\be_2$, $u=4$.
\item Weighted flag variety: $ w\Si \subset \PP^{13}[1^2,2,3^2,4^4,5^2,6,7^2]$,
with weights assigned to the variables $x_i$ in the equations of Appendix A:
\[
 \renewcommand{\arraystretch}{1.5}
 \begin{array}{ccccccccccccccc}
{\rm Variable} & x_1   & x_2   & x_3   & x_4&x_5&x_6&x_7&x_8&x_9&x_{10}&x_{11}&x_{12}&x_{13}&x_{14}\\
{\rm Weight}  &  6 &   1&           3&    5  &  7 &  4 &      2&    7&     5 &    3&    1&    4&    4&    4
 \end{array}
\]
\item Canonical class: $K_{w\Si}= \Oh(-3u)=\Oh(-12) $, since $ w\Si $ is well-formed.
\item Hilbert numerator: $ 1-t^4-2 t^5-4 t^6-2 t^7-t^8+\cdots+t^{44}$.
\end{itemize}

We take a 3-fold quasilinear section \[ X=w\Si \cap \{x_{4}=f_5\} \cap \{x_8=f_7\} \subset \PP^{11}[1^2,2,3^2,4^4,5,6,7] ,\] 
where $ f_5 $  and  $f_7 $ are general homogeneous forms of degree 5, and 7 respectively 
in the remaining variables, so that 
\[ K_X= \Oh(-12+(7+ 5)) = 0.\]
One can verify that $X$ is well-formed as it does not contain any codimension 9 singular stratum. 
We now check the singularities of $X$ arising from the weights of $w\PP$.

\textbf{$1/7$ singularities}: This  stratum is a single point in $w\PP$ and it is readily seen 
that \(X\) contains this singular point. We  use the implicit function theorem to work out local variables near this point. We can eliminate the variables 
$x_{11},x_{10},x_{12},x_{13},x_{7},x_{2},x_{9},$ and  $x_{3} $ from equations (A.1,6,9,11,12,19,26,28) of Appendix A, respectively. Therefore we get $x_1,x_6$ and $x_{14} $ as local variables near this point on an orbifold cover, the group $\mu_7$ acting by \[ \ep: x_1,x_6,x_{14} \mapsto \ep^6 x_1, \ep^4 x_4, \ep^4 x_{14}. \] So this is an isolated singular point in $X$ of type $ \dfrac{1}{7}(6,4,4)$.

\textbf{$1/6$ singularities}: This stratum is defined by \[ X \cap \lbrace{x_2=x_3=x_5=x_6=x_7=x_9=x_{10}=x_{11}=x_{12}=x_{13}= x_{14}=0 \rbrace} .\] This gives $  x_1^2=0 ,$ which has only a trivial solution. So $X$ does not contain this singular point.

\textbf{$1/5$ singularities}: We have only one variable $ x_9 $ of weight 5 and some pure power of $x_9$ appears in the defining equations of $X$, so it does not contain $1/5$ singular points.

\textbf{$1/4$ singularities}: $X$ restricted to this stratum is defined by \[ \lbrace{x_1=x_2=x_3=x_5=x_7=x_9=x_{10}=x_{11}=0 \rbrace}. \]  We get a rational curve \[ C=\{x_6x_{12}+x_{14}^2=0 \}\subset \PP^2[x_6,x_{12},x_{14}]\] of singularities.
%On each point of the curve one of the $x_i \neq 0. $ First we consider the case $ x_6 \neq 0 .$  By using the implicit function theorem  we get $ x_2, x_3  $ as local variables and the group $ \mu_4 $ acts by \[ \ep : x_2,x_3 \mapsto \ep x_2, \ep^3 x_3. \] For $ x_{12} \neq 0, $  we have  $ x_{10},x_{11} $ as local variables, so the group $ \mu_4 $ acts by \[ \ep : x_{10},x_{11} \mapsto \ep^3 x_{10},\ep x_{11}. \] Similarly for $ x_{14} \neq 0 $, we can check the local variables as $ x_3,x_{11}. $ So we get a rational curve of singularities of type $ \dfrac{1}{4}(1,3).$
Standard calculations show that it is a curve of type $\dfrac{1}{4}(1,3)$.

\textbf{$1/3$ singularities}: On this stratum, the equations again have only the trivial solution, so $X$ does not contain $1/3$ singularities.

\textbf{$1/2$ singularities}: On this stratum $X$ does not contain any new singularities, apart from the curve $C$ above. 

\textbf{Quasi-smoothness}: We have already shown that on singular strata \(X\) is locally a threefold 
by finding the local variables by using the implicit function theorem. One can show by similar 
calculations that on the rest of the strata, \(X\) is locally a smooth threefold. 
Therefore \((X,D)\) is quasi-smooth.

In summary, $(X,D)$ belongs to a family of a polarised Calabi--Yau 3-folds with an isolated singular point of type $\dfrac{1}{7}(6,4,4)$ and a rational curve of singularities $C$ of type $\dfrac{1}{4}(1,3).$ By using the Riemann--Roch formula for Calabi--Yau threefolds 
(\cite[Section 3] {anita}), we can compute the rest of the invariants of this family. 
We get the following data.
\begin {itemize}
 \item $D^3= \dfrac{45}{56}$, \,
  $D.c_2(X)=\dfrac{150}{7}$.
\item  $\deg D\rvert_{C}= \dfrac{1}{2}$, $N_C= -4$ and $\tau_C=1$.
\end {itemize}
One can  recover the Hilbert series of this variety following \cite{anita}, by using these singularities as input data.
\end{example}

\section{Weighted Gr(2,6) varieties}\label{g26sec}
\subsection{The weighted flag variety} 
We take $G$ to be the reductive Lie group of type $\GL(6,\CC)$ with maximal torus $T$. 
In the weight lattice $\Lam_W=<e_1,e_2,e_3,e_4,e_5,e_6>$,
the simple roots are $\left\lbrace \al_i=e_i-e_{i+1} \right\rbrace $, for $1 \leq i \leq 5$.
The Weyl group of \(G\) is \(S_6\) of order 720. The Weyl vector can be taken to be
\begin{displaymath}\rho=5e_1+4e_2+3e_3+2e_4+e_5.\end{displaymath}
Consider the irreducible $G$-representation $V_{\lam}$, with $\lam=e_1+e_2$. Then $V_{\lam}$ is 
15-dimensional, and all of the weights appear with multiplicity one. 
The highest weight orbit space $\Si=G/P_{\lam} \subset \PP V_{\lam}=\PP^{14}$ is eight dimensional.  
This flag variety can be identified with the Grassmannian 
of 2-planes in a 6-dimensional vector space, a codimension 6 variety
\[ 
\Si^8= \Gr(2,6) \hookrightarrow \PP V_{\lam}= \PP^{14}.
 \]

Let $\left\lbrace f_i,  1\leq i \leq 6 \right\rbrace $ be the dual basis of the dual lattice $\Lam_W^*$. 
We choose \[  \mu=  \sum_{i=1}^6a_if_i \in \Lam_W^*,\] and $u \in \ZZ$, 
to get the weighted version of $\Gr(2,6)$,
\[ w\Si(\mu,u)=w\Gr(2,6)_{(\mu,u)} \into w\PP^{14}. \]  
The set of weights on our projective space is $ \left\lbrace <w_i,\mu> + u \right\rbrace  $, where $ w_i $ are weights appearing in the \(G\)-representation \(V_{\lam}\). As a convention we will write an element of dual lattice as row vector, i.e. $\mu=(a_{1},a_2,\cdots,a_6).$

We expand the formula (\ref{whhs}) for the given values of \(\lam,\mu\) to get the following formula for the Hilbert series of \(w\Gr(2,6).\) 
 \[P_{w\Gr(2,6)}(t)=\dfrac{1-Q_1(t)t^{2u}+Q_2(t)t^{3u}-Q_3(t)t^{4u}-Q_4(t)t^{5u}+Q_5(t)t^{6u}-Q_6(t)t^{7u}+
 t^{3s+9u}}{\prod_{1\leq i<j\leq 6} (1-t^{a_i+a_j+u})}.\]
Here 
\[
Q_1(t)=\sum_{1\leq i <j\leq 6}t^{s-(a_i+a_j)}
,\ \ \ Q_2(t)=\sum_{1\leq (i,j)\leq 6}t^{s+(a_i-a_j)}-t^s,\]\[Q_3(t)=\sum_{1\leq i \leq j\leq 6}t^{s+(a_i+a_j)},\ \ \ Q_4(t)=\sum_{1\leq i \leq j\leq 6}t^{2s-(a_i+a_j)},\]
\[Q_5(t)=\sum_{1\leq (i , j)\leq 6}t^{2s+(a_i-a_j)}-t^{2s},\ \ \ Q_6(t)=\sum_{1\leq i \leq j\leq 6}t^{2s+(a_i+a_j)}.\]
In particular, if \(w\Gr(2,6) \into \PP^{14}[<w_i,\mu>+u]\) is well-formed, then its canonical bundle is \(K_{w\Gr(2,6)}=\Oh(-2s-6u)\), with \(s=\sum_{i=1}^6a_i.\)

The defining equations for $ \Gr(2,6)\subset\PP^{14}$ are well known to be the $4 \times 4$ Pfaffians, 
obtained by deleting two rows and the corresponding columns of the $6\times 6$ skew symmetric matrix
\begin{equation} A=
\begin{bmatrix}
0 & x_{1} &x_{2}  &x_{3}  &x_{4}&x_5   \\
   & 0         & x_{6} & x_{7} & x_{8}&x_{9}   \\
    &        & 0          & x_{10} & x_{11}&x_{12}   \\
     &       &             & 0          & x_{13}&x_{14}\\
      &&&&                                0  &x_{15}\\
&&&&&                                          0                          
\end{bmatrix} .
\end{equation}

\subsection{Examples}

\begin{example}\label{seriesgr26} For \(u=1\) and \(\mu=(\underline{0})\), we get the Hilbert series of \(\Gr(2,6)\).$$ P_{\Gr(2,6)}(t)= \dfrac{1-15 t^2+35 t^3-21 t^4-21 t^5+35 t^6-15 t^7+t^9}{(1-t)^{15}}.$$ 
We have $ K_{\Gr(2,6)}= \Oh(-6) $. The three fold hyperplane section $$ V= \Sigma \cap H_1 \cap H_2\cap H_3\cap H_4\cap H_5  \subset \PP^{9}, $$  is a Fano 3-fold with $(-K_V)^3=14$ and genus $g=8$. This variety appears in \cite{mukai}.  

We searched  for more examples of \(\QQ-\)Fano threefolds as quasi-linear sections of 
weighted $\Gr(2,6)$, but as before, didn't succeed. 
\end{example}

\begin{example}
Consider the following data.
\label{expgr261}
\begin{itemize}
\item Input: $ \mu=(2,1,0,0,-1,-2),   u=4 $
\item Variety and Weights: $ w \Gr(2,6) \subset \PP^{14}[1,2^2,3^3,4^3,5^3,6^2,7]  $
\item Canonical class: $ K_{w\Gr(2,6)}= \Oh(-24) $
\item Hilbert Numerator: $ 1-t^5-2 t^6-3 t^7-2 t^8-t^9+\cdots -3 t^{29}-2 t^{30}-t^{31}+t^{36} $.
\end{itemize} 
Consider the threefold  quasilinear section \[ X=w\Gr(2,6)\cap (6)\cap (5)^2\cap (4)^2 \subset \PP^9[1,2^2,3^3,4,5,6,7] ,\] then \[ K_X= \Oh(24-(6+2\times 5+2 \times 4) \sim 0.\]
As in Example~\ref{g2example}, it is easy to check that \((X,D)\) is well-formed and quasi-smooth.
The singularities of \((X,D)\) can also be worked out as before. The conclusion is that 
$(X,D)$ is a Calabi--Yau threefold with an isolated singular point $\dfrac{1}{7}(6,5,3)$ and 
a dissident singular point $\dfrac{1}{6}(1,2,3)$ which lies on the intersection of two rational 
curves of singularities $C$ and $E$ of types $\dfrac{1}{3}(1,2)$ and $\dfrac{1}{2}(1,1)$ respectively. 
The rest of data of our variety and its singularities is as follows.
\begin {itemize}
 \item $D^3= \dfrac{11}{21}$, $D.c_2(X)=16$.
\item  $\deg D\rvert_{C}= \dfrac{1}{2}$, $N_C= 3$ and $\tau_C=2$.
\item $\deg D\rvert_{E}= \dfrac{2}{3}$, $N_E= 1$ and $\tau_E=1$.
\end {itemize}
\end{example}

\newpage \appendix \label{g2eq}

\section{Equations of the $G_2$ flag variety}
The codimension eight $G_2$ flag variety is defined by the following 28 quadrics. 
\begin{eqnarray}
&& \textstyle x_{14}^2+x_{13}x_{14}+\frac{1}{3}x_{13}^2+x_{6}x_{12}+\frac{1}{3}x_{4}x_{10}+x_{5}x_{11}+\frac{1}{3}x_{3}x_{9}+x_{2}x_{8}+\frac{1}{3}x_{1}x_{7}\\
&& \textstyle x_{14}^2+2x_{13}x_{14}+\frac{8}{9}x_{13}^2+x_{6}x_{12}+3x_{5}x_{11}-\frac{4}{9}x_{4}x_{10}-\frac{7}{9}x_{3}x_{9}-\frac{5}{9}x_{1}x_{7}\\
&& \textstyle 2x_{13}x_{14}+\frac{5}{3}x_{13}^2+9x_{5}x_{11}-\frac{5}{3}x_{4}x_{10}-\frac{7}{3}x_{3}x_{9}+3x_{2}x_{8}-\frac{4}{3}x_{1}x_{7}\\
&&  \textstyle  x_{13}^2 +9x_{5}x_{11}-3x_{4}x_{10}-3x_{3}x_{9}+9x_{2}x_{8}-2x_{1}x_{7}\\
&&  \textstyle x_{11}x_{14}+\frac{1}{3}x_{11}x_{13}+\frac{1}{9}x_{7}x_{10}+x_{2}x_{12} \\
&&  \textstyle x_{5}x_{10}-\frac{2}{3}x_{4}x_{9}-3x_{3}x_{8}+2x_{1}x_{14}+\frac{1}{3}x_{1}x_{13}\\
&&  \textstyle x_{10}x_{14}+\frac{4}{9}x_{10}x_{13}+\frac{2}{9}x_{7}x_{9}-x_{3}x_{12}+\frac{1}{3}x_{1}x_{11}\\
&&  \textstyle x_{9}x_{14}+x_{9}x_{13}-x_{7}x_{8}+x_{4}x_{12}+\frac{2}{3}x_{1}x_{10}\\
&& \textstyle  x_{8}x_{14}+\frac{2}{3}x_{8}x_{13}-x_{5}x_{12}+\frac{1}{9}x_{1}x_{9}\\
&& \textstyle  2x_{7}x_{14}+\frac{5}{3}x_{7}x_{13}-3x_{4}x_{11}-\frac{2}{3}x_{3}x_{10}+x_{2}x_{9}\\
&& \textstyle  x_{6}x_{8}+x_{5}x_{14}+\frac{1}{3}x_{5}x_{13}+\frac{1}{9}x_{1}x_{4} \\
&& \textstyle  3x_{5}x_{7} -x_{6}x_{9}+x_{4}x_{14}+\frac{4}{3}x_{4}x_{13}-\frac{2}{3}x_{1}x_{3}\\
&& \textstyle   x_{6}x_{10}-\frac{2}{3}x_{4}x_{7}+x_{3}x_{14}-\frac{1}{3}x_{3}x_{13}+3x_{1}x_{2}\\
&& \textstyle  -x_{6}x_{11}+\frac{1}{9}x_{3}x_{7}+x_{2}x_{14}+\frac{2}{3}x_{2}x_{13} \\
&& \textstyle   x_{12}x_{13}-\frac{1}{3}x_{9}x_{10}+3x_{8}x_{11}  \\
&& \textstyle   x_{10}x_{13}-x_{7}x_{9}+3x_{1}x_{11}\\
&& \textstyle   x_{9}x_{13}-3x_{7}x_{8}+x_{1}x_{10}   \\
&& \textstyle  x_{7}x_{13}-3x_{4}x_{11}+3x_{2}x_{9}   \\
&& \textstyle   x_{6}x_{13}-\frac{1}{3}x_{3}x_{4}+3x_{2}x_{5}   \\
&& \textstyle  3x_{5}x_{7}+x_{4}x_{13}-x_{1}x_{3}  \\
&& \textstyle   x_{4}x_{7}+x_{3}x_{13}-3x_{1}x_{2} \\
&& \textstyle  x_{1}x_{13}-3x_{5}x_{10}+3x_{3}x_{8} \\
&& \textstyle   x_{9}x_{11}-\frac{1}{3}x_{10}^2+x_{7}x_{12} \\
&& \textstyle  \frac{1}{3}x_{9}^2-x_{8}x_{10}+x_{1}x_{12}  \\
&& \textstyle   \frac{1}{3}x_{7}^2+x_{3}x_{11}+x_{2}x_{10} \\
&& \textstyle  x_{5}x_{9}+x_{4}x_{8}+\frac{1}{3}x_{1}^2 \\
&& \textstyle   x_{6}x_{7}+\frac{1}{3}x_{3}^2-x_{2}x_{4} \\
&& \textstyle  x_{1}x_{6} -\frac{1}{3}x_{4}^2+x_{3}x_{5}
\end{eqnarray}

This list was obtained using the algorithm described above in Section~\ref{sec:eq}, 
with the help of the GAP4 code given below, kindly provided to us by Willem De Graaf. 
With obvious modifications, this code in principle computes the equations of any flag variety.

%\vspace{0.2in}
\newpage 

{\tt 
hwvecs:= function( L, V )

    local x, n, eqs, i, j, cfs, k, sol;

    x:= ChevalleyBasis( L )[1];
    n:= Dimension( V );
    
    eqs:= NullMat( n, n*Length(x) );
    
    for i in [1..Length(x)] do
    
        for j in [1..n] do
        
         cfs:= Coefficients( Basis(V), $\rm x[i]{}^{\wedge}${\tt Basis(V)[j])};

             {\tt  for k in [1..n] do
                
eqs[j][(i-1)*n+k]:= cfs[k];

            od;
        od;
    od;

    sol:= NullspaceMatDestructive( eqs );

    return List( sol, u -> LinearCombination( Basis(V), u ) );

end;

L:=SimpleLieAlgebra("G",2,Rationals);

V:= AdjointModule( L );

 W:= SymmetricPowerOfAlgebraModule( V, 2 );

 hwv:= hwvecs( L, W );

 M:= List( hwv, u -> LeftAlgebraModuleByGenerators( L, \textbackslash\textasciicircum,\ [u] ) );

 List( M, Dimension );

Basis(M[2]);Basis(M[3]);}}

\noindent {\sc Mathematical Institute, University of Oxford.}

\noindent {\sc 24-29 St Giles', Oxford, OX1 3LB, United Kingdom.}

\vspace{0.1in}

\noindent {\tt qureshi@maths.ox.ac.uk}

\noindent {\tt szendroi@maths.ox.ac.uk}


\begin{thebibliography}{99} 
\bibitem{altinok} {S. Alt{\i}nok}, {Graded rings corresponding to the polarised $K3$ surfaces and $\QQ$-Fano 3-folds}, PhD. thesis, {University of Warwick}, {1998}.

\bibitem{ABR} S. Alt{\i}nok, G. Brown, and M. Reid, Fano 3-folds, {$K3$} surfaces and graded rings, {Topology and geometry: commemorating {SISTAG} (eds. Berrick, Leung, Xu)}, {Contemp. Math.}, {314}, {25--53}, {AMS}, {2002}. 

\bibitem{atiyah} {Atiyah, M. F. and Macdonald, I. G.}, {Introduction to commutative algebra}, {Addison-Wesley}, 1969. 

\bibitem{brown} G. Brown, {Graded ring database}, {\tt http://grdb.lboro.ac.uk/}.

%\bibitem{anita} {A. Buckley}, {Orbifold {R}iemann--{R}och for threefolds and applications to {C}alabi--{Y}aus}, {University of Warwick}, {2003}.

\bibitem{anita} {A. Buckley and B. Szendr{\H o}i}, {Orbifold {R}iemann-{R}och for threefolds with an application to {C}alabi-{Y}au geometry}, {J. Algebraic Geom.}, 14, 601--622, 2005.

\bibitem{wg} {A. Corti and M. Reid}, {Weighted {G}rassmannians}, {Algebraic geometry (eds. Beltrametti, Catanese, Ciliberto, Lanteri, Pedrini)}, {141--163}, {de Gruyter}, {2002}. 

\bibitem{harris} {W. Fulton and J. Harris}, {Representation Theory, A first course}, {GTM 129}, {Springer}, 1991.

\bibitem{degraaf}{W. A. de Graaf}, {Constructing representations of split semisimple {L}ie algebras}, {Effective methods in algebraic geometry (Bath, 2000)}, {J. Pure Appl. Algebra}, 164, 87--107, 2001.

\bibitem{rudakov} {A.L. Gorodentsev, A.S. Khoroshkin, and A. N. Rudakov}, {On syzygies of highest weight orbits}, {Moscow {S}eminar on {M}athematical {P}hysics. {II} (eds. Arnold, Gindikin, Maslov)}, {Amer. Math. Soc. Transl. Ser. 2}, 221, {79--120}, {AMS}, {2007}.

\bibitem{mukai} S. Mukai, Curves, {$K3$} surfaces and {F}ano {$3$}-folds of genus {$\leq 10$},
in: {Algebraic geometry and commutative algebra, {V}ol.\ {I}} (eds. Hijikata, Hironaka, Maruyama, Matsumura, Miyanishi, Oda, Ueno), {357--377}, {Kinokuniya}, 1988.

\bibitem{Q} M. I. Qureshi, D.Phil. thesis, {University of Oxford}, in preparation.

\bibitem{GAP4} The GAP group: {GAP -- Groups, Algorithms, and Programming, Version 4.4.12}, {\tt http://www.gap-system.org}, 2008.

\end{thebibliography}
\end{document}